\let\rm\normalfont
	\def\hbar{{\bar h}}
\def\bbN{\mathbb{N}}
\let\Phi=\Varphi
\let\phi=\varphi
\def\implies{\ifinner \Rightarrow \else \Longrightarrow \fi} 
\let\incl=\subseteq
\let\isom=\cong               
\def\maps{\ifinner \mapsto \else \longmapsto \fi}
\def\thm#1{thm.\hbox{} #1} 
\def\enu#1 {\par \noindent (#1) }  
\def\absv#1{\vert#1\vert}			
\def\set#1{\{#1\}}
\newtheorem{theorem}{Theorem} 
\newtheorem{corollary}[theorem]{Corollary}
\def\enum#1-#2{%
	\in\{\if\voidarg{#1}1,2\else #1\fi,\ldots,#2\}%
}
\protected\def\voidarg#1{%
	//\let\unlessb\empty\else\let\unlessb\unless\fi 
  \begingroup 
  	\aftergroup\expandafter\aftergroup\unlessb
  	\def\tempvar{#1}%
	\ifx\tempvar\empty \aftergroup \iftrueb 
	\else \aftergroup \iffalseb 
	\fi
  \endgroup
}  
\def\iffalseb{\iffalse}
\def\iftrueb{\iftrue}
\let\If=\if
\def\diffD{\mathop{\raise .05em\hbox{${\mathsurround 0pt\smallsetminus}$}}}%
\def\diffS{\mathop{\raise .085em\hbox{$\mathsurround 0pt\scriptstyle\smallsetminus$}}}%
\def\restr{\raise -2 pt \hbox{$\m@th |$}}
\def\Mtext#1{\quad\mbox{\rm #1}\quad}
\def\smallbreak{\par \ifdim\lastskip<\smallskipamount\removelastskip\penalty -50\smallskip\fi}
\def\medbreak{\par \ifdim\lastskip<\medskipamount\removelastskip\penalty -100\medskip\fi}
\def\bigbreak{\par \ifdim\lastskip<\bigskipamount\removelastskip\penalty -200\bigskip\fi}
\title{A NOTE ON THE PRODUCT OF THE CONJUGATES OF A POLYNOMIAL}
\begin{document}%

\baselineskip = 16pt plus 0.1pt

\setbox0 = \hbox{\rm \large Micha\"el Bensimhoun,}
\setbox1 = \hbox{\rm Jerusalem, July 2014 \vrule height 15pt width 0pt }
\author{%
\noindent\box0\\
\box1\\
}%

\def\Gal{\mathop{\mathrm{Gal}}}

\begin{abstract}
\small 
\baselineskip = 14pt
The theorem proved in this note, although elementary, is related to
a certain misconception.
If $K$ is a field, $f\in K[X]$ is separable and
irreducible over $K$, and $g$ is a polynomial dividing $f$, whose
coefficients lie in some finite Galois extension of $K$,  
it may seem natural to assert that the product of 
the conjugates of $g$ over $K[X]$ is $f$.
But this assertion is wrong, except in one particular case.
In this note, we make the relation between $K$, $f$, the product
of the conjugates of $g$, and the coefficient field
of $g$, precise. In particular, it is shown that the 
product of the conjugates of $g$ over $K[X]$ is equal to 
$f^n$, with $n\in \bbN$. 
\end{abstract}

\maketitle

{\small \noindent {\bf\sc Key Words: }
Conjugate, polynomial, product of conjugate polynomials
}
\bigbreak

\parskip 3pt plus 1pt minus 1pt
\parindent 0pt  

The aforementioned result, although too elementary to be new,
is not so easily found inside common resources:
it does not appear in the textbooks we have checked, like \cite{La} or \cite{St}, 
nor does it appear, to our knowledge, 
inside common resources like \emph{Wikipedia} or \emph{Mathwork}. 
It is related to a misconception, which sometimes occurs 
even in the work of experienced mathematicians\footnote{According to a personal communication of 
Prof.~M.~Jarden to the author, during the preparation of his thesis.}, 
according to which, the product of the conjugates 
of a divisor of an irreducible separable polynomial $f\in K[X]$, in an extension of $K$, is 
the polynomial $f$ itself.   
This assertion is wrong in general, but the following theorem holds.

\begin{theorem}\label{thm3} 
Let $K$ be a field, and $f\in K[X]$ be separable and
irreducible over $K$.
Assume that $g=a_0+a_1X+\cdots a_\nu X^\nu$ 
is a polynomial dividing $f$, whose
coefficients $a_i$ lie inside a finite extension of $K$.
Let $M$ be the splitting field of $f$,
$L=K(a_0,a_1,\ldots,a_\nu)$, and $\,G=\Gal(M/K)\isom \Gal(M[X]/K[X])$.
Let §m§ denote the number of distinct conjugates of §g§ over §K[X]§,
and assume that $g^{\sigma_1}$, $g^{\sigma_2}$, \dots, $g^{\sigma_m}$ is an
enumeration of these conjugates, with $\sigma_i\in G$ ($m\leq |{G}|$). 

(i) 
There holds: $m=[L:K]$;

(ii) If 
$
h=g^{\sigma_1}g^{\sigma_2}\cdots g^{\sigma_m},
$
then $h\in K[X]$ and 
$ h=cf^n$, with $$n=[L:K]\deg(g)\,/\deg(f)  \quad \hbox{and}\quad  c\in K.$$
\end{theorem}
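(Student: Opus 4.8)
The plan is to exploit that $M/K$ is Galois (a splitting field of a separable polynomial is normal and separable) and that $G=\Gal(M/K)$ acts on $M[X]$ coefficientwise, fixing $X$, with fixed subring exactly $K[X]$ (a polynomial is $G$-invariant iff each coefficient lies in $M^G=K$). Since the roots of $g$ are among the roots $\alpha_1,\dots,\alpha_d$ of $f$, all of which lie in $M$, the coefficients of $g$ lie in $M$; thus $L\subseteq M$ and the conjugation $g^{\sigma}$, $\sigma\in G$, is well defined, as the statement presupposes. I would record the factorization $f=\mathrm{lc}(f)\prod_{j=1}^{d}(X-\alpha_j)$ in $M[X]$, with the $\alpha_j$ distinct by separability, and recall that $G$ acts transitively on $\{\alpha_1,\dots,\alpha_d\}$ because $f$ is irreducible.

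For (i), I would observe that $g^{\sigma}=g^{\tau}$ holds iff $\sigma$ and $\tau$ agree on every coefficient $a_i$ of $g$, i.e.\ iff $\sigma|_L=\tau|_L$, since the $a_i$ generate $L$ over $K$. Hence the number $m$ of distinct conjugates equals the number of distinct restrictions $\sigma|_L$ as $\sigma$ ranges over $G$. These restrictions are precisely the $K$-embeddings of $L$ into $M$, and since $L/K$ is separable (being a subextension of the separable extension $M/K$) and $M/K$ is normal, there are exactly $[L:K]$ of them; equivalently $m=[G:\Gal(M/L)]=[L:K]$ by the fundamental theorem. This gives $m=[L:K]$.

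For the first half of (ii), I would show $h\in K[X]$ by noting that each $\tau\in G$, being a ring automorphism of $M[X]$, sends conjugates of $g$ to conjugates of $g$ and is injective, hence permutes the finite set $\{g^{\sigma_1},\dots,g^{\sigma_m}\}$. Consequently $h^{\tau}=h$ for all $\tau$, so $h$ is $G$-invariant and therefore lies in $K[X]$.

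For the shape $h=cf^n$, I would factor over $M$. Since $g\mid f$ and $\sigma_i$ fixes $K[X]$, we get $g^{\sigma_i}\mid f^{\sigma_i}=f$; hence every root of every $g^{\sigma_i}$ lies in $\{\alpha_1,\dots,\alpha_d\}$, and
\[
h=\mathrm{lc}(h)\prod_{j=1}^{d}(X-\alpha_j)^{e_j},\qquad e_j=\#\{\,i:\alpha_j\text{ is a root of }g^{\sigma_i}\,\}.
\]
The decisive step is the multiplicities: applying $\tau\in G$ to this factorization and using $\mathrm{lc}(h)\in K$ together with $h^{\tau}=h$ shows that $e_j$ is constant on the $G$-orbits of the roots; since $G$ is transitive on the $\alpha_j$, all $e_j$ equal a single integer $n$. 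Then $h=\mathrm{lc}(h)\big(\prod_j(X-\alpha_j)\big)^n=c\,f^n$ with $c=\mathrm{lc}(h)/\mathrm{lc}(f)^n\in K$, and a degree count $\deg h=m\deg g=n\deg f$ combined with (i) yields $n=m\deg(g)/\deg(f)=[L:K]\deg(g)/\deg(f)$. I expect the main obstacle to be exactly this equal-multiplicity argument: everything else is bookkeeping, but here one must combine the transitivity of $G$ on the roots of the irreducible $f$ with the $G$-invariance of $h$ to force a common exponent.
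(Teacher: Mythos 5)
Your proof is correct, and it departs from the paper's only at the decisive step. Part (i) is the same argument in different clothing: you count distinct restrictions $\sigma|_L$, the paper counts cosets of $\mathrm{Gal}(M/L)$ in $G$; both reduce to $[G:\mathrm{Gal}(M/L)]=[L:K]$. The proof that $h\in K[X]$ (each $\tau\in G$ permutes the finite set of conjugates, so $h$ is $G$-invariant) is identical. For $h=cf^n$, however, the paper never leaves $K[X]$: from $g^{\sigma_i}\mid f$ it deduces $h\mid f^m$, takes the minimal $n$ with $h\mid f^n$, writes $hh'=f^n$, notes that minimality forces $f\nmid h'$, and then uses that $f$ is prime in the UFD $K[X]$ to conclude $(h/f^n)h'=1$, so both factors are constants in $K$. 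You instead split $h$ into linear factors over $M$, attach a multiplicity $e_j$ to each root $\alpha_j$ of $f$, and combine $h^\tau=h$ with unique factorization in $M[X]$ and the transitivity of $G$ on the roots of the irreducible separable $f$ to force all $e_j$ equal; this is a genuinely different mechanism and is sound. Your route is more explicit --- it exhibits $n$ as the common multiplicity and makes visible why irreducibility (via transitivity on roots) is what produces a pure power --- whereas the paper's divisibility-and-primality argument is the one that transplants verbatim to its second theorem about prime elements of integral domains, where there are no roots to act on. One shared loose end, which is really a gap in the statement rather than in either proof: forming $g^\sigma$ for $\sigma\in\mathrm{Gal}(M/K)$ requires $L\subseteq M$, and your justification (coefficients of $g$ are symmetric functions of roots of $f$) covers the monic normalization but tacitly ignores the leading coefficient of $g$, which a priori need not lie in $M$.
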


\begin{proof}

(i)
If $\sigma$ and $\sigma'$ belong to $G$, 
then $g^{\sigma}=g^{\sigma'}$ if and 
only if $\sigma'{\sigma}^{-1}$ fixes $g$; this is 
possible if and only if $\sigma'{\sigma}^{-1}$ fixes all the
coefficients of $g$, that~is,
if $\sigma'{\sigma}^{-1}$ belongs to $\Gal(M/L)$.
This can be rephrased as follows:  $g^{\sigma}=g^{\sigma'}$ if and 
only if $\sigma'\in \Gal(M/L)\sigma$.
Hence, each coset of the form $\Gal(M/L)\sigma$, with $\sigma\in G$, 
corresponds to one and only one conjugate of $g$ over $K$.
There are $\absv{G}/\absv{\Gal(M/L)}$ such
cosets, therefore the number of conjugates of $g$ over $K$ is 
$$
m=\absv{G}/\absv{\Gal(M/L)}
=[M:K]/[M:L]=[L:K].
$$

(ii)
Given $\sigma\in G$, let us consider the polynomial
$$
h^\sigma=g^{\sigma_1\sigma}\cdots g^{\sigma_m\sigma}.
$$
Since $\sigma$ is bijective, 
the elements $g^{\sigma_i\sigma}$ are pairwise distinct whenever 
the index $i$ varies in $\set{1,\ldots m}$.
But they are obviously conjugates of $g$ over $K[X]$, hence
the  elements $g^{\sigma_i\sigma}$ are 
in fact all the conjugates of $g$ over $K[X]$.
In other words, every $\sigma\in G$ fixes the set
$\set{g^{\sigma_1},\ldots g^{\sigma_m}}$ of conjugates of $g$.
As a consequence, their product, the polynomial $h$, is fixed by $G$.  
Thus the coefficients of $h$ belong to $K$: $h\in K[X]$.

Now, since $g$ divides $f$, it is clear that $g^{\sigma_i}$ divides
$f^{\sigma_i}=f$ for every $i\enum-m$.
Hence $h$ divides $f^m$.
Let $n\in \bbN$ be the smallest number such that 
$h$ divides $f^n$ in $K[X]$.
There exists $h'\in K[X]$ 
such that $hh'=f^n$.
If $f$ would divide $h'$, then $f$ would cancel in both
sides of the equation, and $h$ would divide
$f^{n-1}$, contradicting the minimality of $n$ with
respect to this property.
Thus, $f$ does not divide $h'$.

The ring $K[X]$ is a unique factorization domain, and $f$ is irreducible
in $K[X]$ according to the hypothesis; hence $f$ is prime in $K[X]$.
Since $f$ does not divide $h'$ it follows from the above equation 
that $f^n$ divides $h$ in  $K[X]$, and there holds 
$$
\frac{h}{f^n}h'=1.
$$
As a consequence, both $h/f^n$ and $h'$ must belong to $K$.
Thus, $h=cf^n$, with $c\in K$.

Finally, since $h=cf^n$, 
$$
\deg(h)=n\deg(f).
$$
On the other hand, since $\deg(g^\sigma_i)=\deg(g)$ for 
every $i\enum-m$,
$$
\deg(h)=m\deg(g).
$$ 
Combining these equations leads to 
$$
n=m\frac{\deg(g)}{\deg(f)}=[L:K]\frac{\deg(g)}{\deg(f)}.
$$
\end{proof}

\begin{corollary} \label{corthm3}
With the same hypotheses as in \thm \ref{thm3}, 
assume furthermore that $g$ is irreducible over $L$ and
that $f$ has a primitive root $\theta$ 
(that~is, every other root of $f$ belongs to $K(\theta)$).
Then $h=f$.
\end{corollary}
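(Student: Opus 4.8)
The plan is to lean entirely on Theorem~\ref{thm3}(ii), which already delivers $h\in K[X]$ together with the factorization $h=cf^{n}$, where $c\in K$ and $n=[L:K]\deg(g)/\deg(f)$. Thus nothing new has to be proved about the shape of $h$; the whole content of the corollary reduces to the two numerical facts $n=1$ and $c=1$. I would treat these separately, with the computation of $n$ carrying essentially all of the mathematical weight.

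For $n$, the idea is to rewrite the exponent as a ratio of field degrees and then squeeze it between $1$ and $1$. Let $\beta\in M$ be a root of $g$. Since $g$ divides the separable polynomial $f$ it is itself separable, and since $g$ is assumed irreducible over $L$ it is, up to its leading coefficient, the minimal polynomial of $\beta$ over $L$; hence $[L(\beta):L]=\deg(g)$, and multiplicativity of degrees gives $[L(\beta):K]=[L:K]\deg(g)$. The primitive root hypothesis now enters decisively: because every root of $f$ lies in $K(\theta)$ and $M$ is generated over $K$ by the roots of $f$, we have $M=K(\theta)$, and as $f$ is irreducible it is the minimal polynomial of $\theta$, so $[M:K]=[K(\theta):K]=\deg(f)$. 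Since $\beta$ is a root of $f$ and $L\subseteq M$ (the conjugation by $G=\Gal(M/K)$ already presupposes $g\in M[X]$), we get $L(\beta)\subseteq M$, whence
\[
n=\frac{[L:K]\deg(g)}{\deg(f)}=\frac{[L(\beta):K]}{[M:K]}\le 1.
\]
On the other hand $n$ was defined as the smallest \emph{positive} integer with $h\mid f^{n}$, so $n\ge 1$; therefore $n=1$ and $h=cf$.

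The remaining step, fixing the constant, is purely a matter of leading coefficients, and it is the only place where a normalization must be made explicit. Comparing leading coefficients in $h=g^{\sigma_1}\cdots g^{\sigma_m}$ gives $\mathrm{lc}(h)=\prod_{i}\sigma_i(a_\nu)=N_{L/K}(a_\nu)$, so that $c=N_{L/K}(a_\nu)/\mathrm{lc}(f)$; with the standing convention that $f$ and the divisor $g$ are monic, every $g^{\sigma_i}$ is monic, hence so is $h$, and $c=1$, yielding $h=f$ exactly. I expect the genuine obstacle to be not this bookkeeping but the degree identity $[L:K]\deg(g)=\deg(f)$ hidden in the inequality above: it is simply false without the primitive root assumption, since in general $[M:K]$ strictly exceeds $\deg(f)$, and the heart of the proof is recognizing that this assumption is exactly what forces $L(\beta)$ to fill out all of $M$.
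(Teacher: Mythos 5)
Your proof is correct, and its core is the same degree computation that drives the paper's own argument: the hypotheses ``$g$ irreducible over $L$'' and ``$f$ has a primitive root'' combine to force $[L:K]\deg(g)=\deg(f)$, hence $n=1$ in Theorem~\ref{thm3}(ii). You reach that identity by a slightly different route. The paper first observes that \emph{every} root of $f$ is primitive, so that $\theta$ may be taken to be a root of $g$ itself, and then computes the exact chain $[L:K]\deg(g)=[L(\theta):K]=[K(\theta):K]=\deg(f)$, using $L\subseteq K(\theta)$. You instead take an arbitrary root $\beta$ of $g$, get $[L(\beta):K]=[L:K]\deg(g)$ from irreducibility over $L$, and squeeze: $L(\beta)\subseteq M=K(\theta)$ gives $n\le 1$, while $n\ge 1$ is immediate. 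This saves the ``every root of $f$ is primitive'' lemma at the cost of invoking the lower bound on $n$. The one place where you go genuinely beyond the paper is the constant: the paper's proof stops at ``$n=1$, that is, $h=f$'' and never addresses $c$. You are right that a normalization is needed --- without one the stated conclusion is literally false (replace $g$ by $2g$: the field $L$ is unchanged but $h$ acquires a factor $2^{m}$), and your computation $\mathrm{lc}(h)=N_{L/K}(\mathrm{lc}(g))$ together with a monicity convention is the correct repair. One small slip: you describe $n$ as ``the smallest \emph{positive} integer with $h\mid f^{n}$,'' whereas the theorem takes the smallest natural number; the bound $n\ge 1$ still holds, but because $\deg g\ge 1$ forces $\deg h\ge 1$, not by definition.
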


\begin{proof}
We observe that every other root $\theta'$ of $f$ has the same degree than
$\theta$ over $K$.
Since $K(\theta')\incl K(\theta)$, it follows that
$K(\theta')=K(\theta)$. 
In other words, every root of $f$ is primitive.
So, we can assume w.l.g.\@ that $\theta$ is also a root of $g$.

Now, it is clear that $L\incl K(\theta)$, since the roots of 
$g$ span the coefficients of $g$,
hence 
$$
[L:K]\deg(g)=[L:K][L(\theta):L]=[L(\theta):K]
=[K(\theta):K]=\deg(f).
$$
By \thm \ref{thm3}, we conclude that $n=1$, that~is, $h=f$.
\end{proof}

A part of \thm \ref{thm3} is true in a more general 
context, as stated in the following theorem.

\begin{theorem}
Let $R_1$ and $R_2$ be integral domains, with $R_1\incl R_2$,
$K_1$ be the fraction field of $R_1$, and $K_2$ be the fraction
field of $R_2$. 
Assume that $\theta$ is a prime element of $R_1$,
and that $\theta'\in R_2$ divides $\theta$ in $R_2$.
Assume also that the extension $K_2/K_1$ is finite and separable, 
and that $N_{K_1(\theta')/K_1}(R_2)\incl R_1$.
Let $\theta'_1$, $\theta'_2$,\dots, $\theta'_m$ be the 
distinct conjugates of $\theta'$ over $K_1$ (with $\theta'_1=\theta'$), 
and
$$
\abovedisplayskip=5pt \belowdisplayskip=5pt
\Theta=\theta'_1\theta'_2\cdots \theta'_m
=N_{K_1(\theta')/K_1}(\theta').
$$
Then
$\Theta=\theta^n u$, where
$u$ is unit of $R_1$ and $n\leq [K_1(\theta'):K_1]$.
\end{theorem}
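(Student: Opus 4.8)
The plan is to imitate the argument of Theorem~\ref{thm3}, but with the unique factorization in $K[X]$ replaced by two ingredients: the multiplicativity of the norm, which manufactures the product of conjugates, and the primality of $\theta$ in $R_1$, which lets one extract powers of $\theta$ even though $R_1$ need not be a unique factorization domain. The starting point is the identity $\Theta=N_{K_1(\theta')/K_1}(\theta')$ together with the hypothesis $N_{K_1(\theta')/K_1}(R_2)\subseteq R_1$, which already guarantees $\Theta\in R_1$. Here separability of $K_2/K_1$ is what ensures the conjugates of $\theta'$ are distinct and that their number $m$ equals $d:=[K_1(\theta'):K_1]$, so that the stated product is genuinely the norm.

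First I would exploit the divisibility hypothesis. Since $\theta'$ divides $\theta$ in $R_2$, write $\theta=\theta'\beta$ with $\beta\in R_2$. Because $\theta\in K_1\subseteq K_1(\theta')$ and $\theta'\in K_1(\theta')$, the quotient $\beta=\theta/\theta'$ lies in $K_1(\theta')$, hence in $R_2\cap K_1(\theta')$; thus $N_{K_1(\theta')/K_1}(\beta)$ is defined and, by hypothesis, lies in $R_1$. Applying the norm to $\theta=\theta'\beta$ and using multiplicativity, together with $N_{K_1(\theta')/K_1}(\theta)=\theta^{d}$ (valid since $\theta\in K_1$), yields the key relation
$$
\theta^{d}=\Theta\cdot N_{K_1(\theta')/K_1}(\beta),
$$
an equation all of whose terms lie in $R_1$.

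It then remains to read off the shape of $\Theta$ from this equation. Let $n\ge 0$ be the largest integer with $\theta^{n}\mid\Theta$ in $R_1$; this is well defined and satisfies $n\le d$, for if $\theta^{d+1}\mid\Theta$ one could cancel $\theta^{d}$ in the displayed relation and conclude that $\theta$ is a unit, contradicting primality. Writing $\Theta=\theta^{n}u$ with $\theta\nmid u$, cancellation gives $u\,N_{K_1(\theta')/K_1}(\beta)=\theta^{d-n}$. A short induction on $d-n$, using at each step that $\theta$ is prime and does not divide $u$ (so that each surviving factor of $\theta$ must be absorbed by the norm of $\beta$), forces all $d-n$ factors of $\theta$ out of $N_{K_1(\theta')/K_1}(\beta)$ and leaves $u$ as a divisor of $1$; hence $u$ is a unit and $\Theta=\theta^{n}u$ with $n\le d$, as claimed.

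The main obstacle is not computational but interpretive: one must make sure the norm hypothesis is applicable to $\beta=\theta/\theta'$, i.e. that $\beta$ genuinely lands in $R_2\cap K_1(\theta')$, where $N_{K_1(\theta')/K_1}$ takes values in $R_1$. Once the relation $\theta^{d}=\Theta\,N_{K_1(\theta')/K_1}(\beta)$ is secured inside $R_1$, the primality of $\theta$ does all the remaining work. I would also remark that, in contrast with Theorem~\ref{thm3}, no degree count is available to determine how the $d$ factors of $\theta$ split between $\Theta$ and $N_{K_1(\theta')/K_1}(\beta)$, which is precisely why one can only assert the inequality $n\le[K_1(\theta'):K_1]$ rather than an exact value of $n$.
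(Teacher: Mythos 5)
Your proof is correct and follows essentially the same route as the paper's: both derive the key identity $\theta^{[K_1(\theta'):K_1]}=\Theta\cdot N_{K_1(\theta')/K_1}(\beta)$ from multiplicativity of the norm and then use the primality of $\theta$ to peel off powers of $\theta$ and conclude that the remaining cofactor is a unit. The only cosmetic difference is that you characterize $n$ as the largest exponent with $\theta^{n}\mid\Theta$ rather than the smallest with $\Theta\mid\theta^{n}$; your extra care in verifying that $\beta=\theta/\theta'$ lies in $R_2\cap K_1(\theta')$ (so that the norm hypothesis applies) is a welcome refinement of a point the paper leaves implicit.
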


\begin{proof}
Let us set $L=K_1(\theta')$.
Since $\theta'$ divides $\theta$ in $R_2$,
$\theta=\theta'\nu$, with $\nu\in R_2$.
It follows from the well known properties of 
the norm that
$$
N_{L/K_1}(\theta)=\theta^{[L:K_1]}
=N_{L/K_1}(\theta')N_{L/K_1}(\nu)
=\Theta N_{L/K_1}(\nu). 
$$
Moreover, $N_{L/K_1}(R_2)\incl R_1$, hence $\Theta$ divides
$\theta^{[L:K]}$ in $R_1$.

Let $n\in \bbN$ be the smallest number such that 
$\Theta$ divides $\theta^n$ in $R_1$:
There exists $\Theta'\in R_1$ 
such that $\Theta\Theta'=\theta^n$.
If $\theta$ would divide $\theta'$, $\theta$ would cancel from both
sides of the equation, and $\Theta$ would divide
$\theta^{n-1}$, contradicting the minimality of $n$ with 
respect to this property. 
Hence  $\theta$ does not 
divide $\Theta'$.

Since $\theta$ is prime in $R_1$, 
$\theta^n$ must divides $\Theta$ in $R_1$, and there holds
$$
(\Theta/\theta^n)\Theta'=1.
$$
As a consequence, $\Theta/\theta^n$ is a unit of $R_1$,
or what is the same, 
$$
\Theta=\theta^nu, \Mtext{with $u$ unit of $R_1$.}
$$
\end{proof}

\bigskip
\bigskip

\bibliography{poly_conj}{}
\bibliographystyle{amsplain}
\end{document}


\end{document}